\newtheorem{theorem}{Theorem}[section]
\newtheorem{proposition}[theorem]{Proposition}
\theoremstyle{remark}
\newtheorem{remark}[theorem]{Remark}
\theoremstyle{definition}
\begin{document}

\title[A symmetric motion picture of the twist-spun trefoil]{A symmetric motion picture \\ of the twist-spun trefoil}
\author{Ayumu Inoue}
\address{Department of Mathematics, Tokyo Institute of Technology, Ookayama, Meguro--ku, Tokyo, 152--8551 Japan}
\email{ayumu.inoue@math.titech.ac.jp}

\subjclass[2000]{Primary 57Q45; Secondary 68U05, 68U07}
\keywords{$2$-knot, motion picture, diagram, visualization, 3D modeling}

\begin{abstract}
With the aid of a computer, we provide a motion picture of the twist-spun trefoil which exhibits the periodicity well.
\end{abstract}

\maketitle

\section{Introduction}
\label{sec:introduction}

A \emph{$2$-knot} will be a $2$-sphere embedded into $\mathbb{R}^{4}$ locally flatly.
Although we cannot see a $2$-knot directly with our eyes, there are several ways to visualize it.
A \emph{motion picture} is one of the ways.
Roughly speaking, a motion picture is a CT scan (Computed Tomographic scan) of a $2$-knot.
It is a one-parameter family of slices of a $2$-knot, that are $0$- or $1$-dimensional objects in $\mathbb{R}^{3}$.

The \emph{$n$-twist-spun trefoil} is a periodic $2$-knot whose period is $2 \pi / n$ ($n \geq 1$).
A motion picture of the twist-spun trefoil should exhibit the periodicity.
However, we have not known such a symmetric motion picture so far.
To obtain a symmetric motion picture, we have to see a whole shape of the twist-spun trefoil.
It is difficult with paper and pencil.
We thus consider to use a computer.

In this paper, we first construct a symmetric diagram of the $2$-twist-spun trefoil using a 3D modeling software.
Here, a diagram is a projection image of a $2$-knot by a projection $\mathbb{R}^{4} \rightarrow \mathbb{R}^{3}$ with over/under information.
Slicing this diagram, we obtain a motion picture of the $2$-twist-spun trefoil which exhibits the periodicity well.
Observing this motion picture, we finally show the following theorem.

\begin{theorem}
\label{thm:main}
A motion picture depicted in Figure \ref{fig:motion_picture_of_n_twist_spun_trefoil} presents the $n$-twist-spun trefoil.
\begin{figure}[htb]
 \begin{center}
  \includegraphics[scale=0.32]{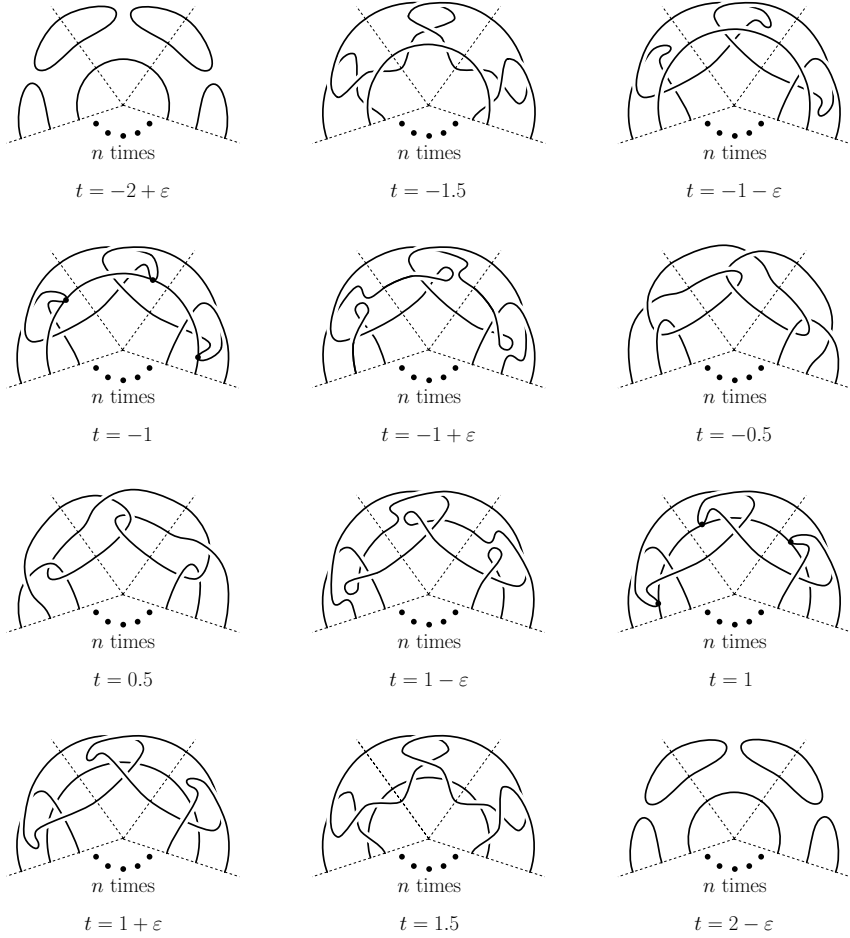}
 \end{center}
  \caption{A symmetric motion picture of the $n$-twist-spun trefoil}
 \label{fig:motion_picture_of_n_twist_spun_trefoil}
\end{figure}
\end{theorem}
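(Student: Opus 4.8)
The plan is to recognize the surface encoded by the motion picture as the result of Zeeman's twist-spinning construction applied to the trefoil. Recall that the $n$-twist-spun trefoil is obtained by removing a trivial arc from the trefoil to produce a knotted $1$-tangle $\tau$ in a $3$-ball, and then spinning the copies of $\tau$ around a circle in $\mathbb{R}^{4}$ while inserting $n$ full twists over one revolution. Accordingly, I would first set up coordinates on $\mathbb{R}^{4}$ in which this spinning circle and the slicing direction of the motion picture are both explicit, so that each frame of Figure \ref{fig:motion_picture_of_n_twist_spun_trefoil} can be read honestly as a slice of such a spun surface.

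First I would verify that the diagram is a legal motion picture: it should begin and end with the empty slice, and each transition between still frames should be one of the elementary events---a birth or death of a simple closed curve, a single saddle (band) move, or a planar isotopy realized by Reidemeister moves. Counting these events gives the Euler characteristic of the underlying surface as $\#(\text{births}) + \#(\text{deaths}) - \#(\text{saddles})$; I expect this to equal $2$, confirming that the surface is a $2$-sphere, while a local check at each event confirms that this sphere is embedded and locally flat.

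Next I would exploit the claimed $2\pi/n$-periodicity to cut the motion picture into $n$ congruent fundamental domains. Within a single fundamental domain I would locate the knotted tangle $\tau$ of the trefoil together with exactly one full twist, tracking the over/under data through the frames in order to certify both that the spun tangle is genuinely the trefoil (rather than the unknot or some other knot) and that the twisting has the correct sense and total amount $n$ over the whole period. Matching this with the twist-spinning description---or, equivalently, reducing the motion picture to a reference movie of the twist-spun trefoil by a finite sequence of movie moves---then yields the theorem.

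The hard part will be the global certification: local slice data together with periodicity must be shown to \emph{force} the isotopy type of the entire surface, and the subtle bookkeeping of accumulated twisting and crossing information across a full revolution is precisely where an appeal to the explicit computer model, or to an explicit movie-move reduction, becomes indispensable.
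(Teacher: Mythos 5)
Your overall skeleton --- exploit the periodicity, isolate a fundamental domain, certify that it carries one full twist of the trefoil tangle, and assemble --- matches the architecture of the paper's argument, but the proposal has a genuine gap at exactly the decisive step, and you concede as much in your final paragraph. The checks you actually specify (the movie is legal; $\#(\text{births}) + \#(\text{deaths}) - \#(\text{saddles}) = 2$, so the surface is a sphere; a trefoil tangle and a twist are visible in a fundamental domain) cannot force the isotopy type of the surface: many inequivalent $2$-spheres have movies in which a trefoil tangle appears somewhere, and ``locating exactly one full twist'' in a sequence of still frames is not a well-defined local condition, since the twisting is smeared across saddle moves and planar isotopies and its net effect can only be certified by matching an entire period against a reference presentation. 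Deferring that matching to ``an appeal to the explicit computer model, or to an explicit movie-move reduction'' leaves precisely the content of the theorem unproved.

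The paper fills this gap by a different route: it never certifies the motion picture of Figure \ref{fig:motion_picture_of_n_twist_spun_trefoil} frame by frame. Instead it first constructs a rotationally symmetric \emph{diagram} of the $2$-twist-spun trefoil in $\mathbb{R}^{3}$ and proves in Proposition \ref{prop:is_diagram_of_two_twist_spun_trefoil} that it is correct, by slicing the diagram with vertical half-planes through the symmetry axis parametrized by $\theta$ and comparing the resulting sequence of cross-sections for $0 \leq \theta \leq \pi$ (Figure \ref{fig:proof_is_diagram_of_two_twist_spun_trefoil_2}) with a sequence that Satoh and Shima \cite{SS04} already proved presents one twist of a trefoil (Figure \ref{fig:proof_is_diagram_of_two_twist_spun_trefoil_3}), via an intermediate sequence (Figure \ref{fig:proof_is_diagram_of_two_twist_spun_trefoil_4}); this citation is the external input that replaces your unexecuted certification. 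Theorem \ref{thm:main} then follows by slicing the verified diagram along horizontal planes to obtain the symmetric movie for $n = 2$ (Figure \ref{fig:motion_picture_of_two_twist_spun_trefoil}), and general $n$ is handled not by verifying the total twisting in Figure \ref{fig:motion_picture_of_n_twist_spun_trefoil} directly --- which is what your plan would require --- but by laying out $n$ compressed copies of the certified half-period sector in a circle. If you want to repair your proposal, the realistic fix is the same maneuver: reduce one period of the movie to the Satoh--Shima sequence, or to some other independently certified presentation of one twist of the trefoil, rather than attempting to read the accumulated twisting off the frames.
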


Note that we can find the periodicity in this motion picture.

\section{Preliminaries}
\label{sec:preliminaries}

In this section, we recall the definitions of a diagram and a motion picture of a $2$-knot briefly.
We refer the reader \cite{CS98, Kamada02} for details of $2$-dimensional knot theory.

\subsection{Diagram}
\label{subsec:diagram}

Let $K$ be a $2$-knot and $\pi : \mathbb{R}^{4} \rightarrow \mathbb{R}^{3}$ a projection.
A point $p \in \pi(K)$ is said to be a \emph{regular point}, a \emph{double point}, a \emph{triple point}, and a \emph{branch point} if there exists a regular neighborhood $N$ of $p$ such that the triple $(N, N \cap \pi(K), p)$ is homeomorphic to the triples $(N_{1}, F_{1}, p_{1})$, $(N_{2}, F_{2}, p_{2})$, $(N_{3}, F_{3}, p_{3})$, and $(N_{4}, F_{4}, p_{4})$ depicted in Figure \ref{fig:points} respectively.
A projection $\pi$ is said to be \emph{generic} for $K$ if $\pi(K)$ consists of regular points, double points, triple points, and branch points.
It is known that there exists a generic projection for any $2$-knot.
\begin{figure}[htb]
 \begin{center}
  \includegraphics[scale=0.25]{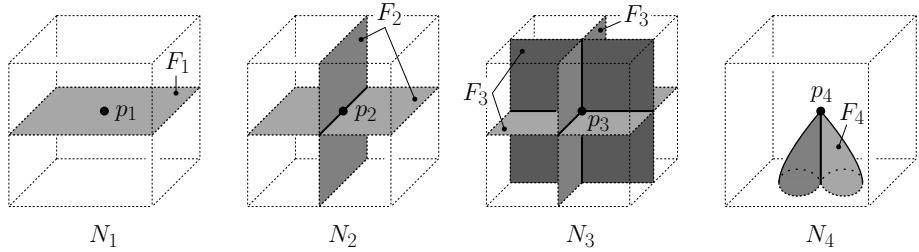}
 \end{center}
 \caption{Regular point, double point, triple point, and branch point}
 \label{fig:points}
\end{figure}

Assume that $\pi$ is generic for $K$.
A \emph{diagram} of $K$ is the image $\pi(K)$ with over/under information on double points and triple points.
We describe over/under information by removing a neighborhood of each under crossing point.
See Figures \ref{fig:parts_of_diagram} and \ref{fig:ribbon_knot}.
A subset of $\pi(K)$ consisting of all double points, triple points, and branch points is called the \emph{singularity set}.
\begin{figure}[htb]
 \begin{center}
  \includegraphics[scale=0.25]{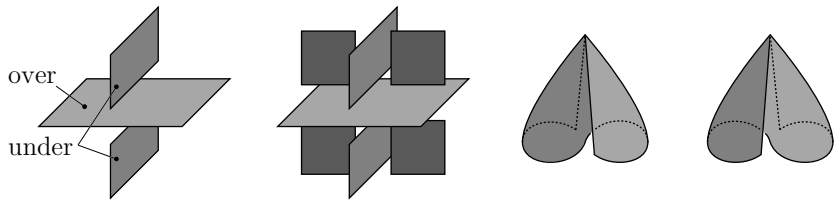}
 \end{center}
 \caption{Double point, triple point, and branch points with over/under information}
 \label{fig:parts_of_diagram}
\end{figure}
\begin{figure}[htb]
 \begin{center}
  \includegraphics[scale=0.28]{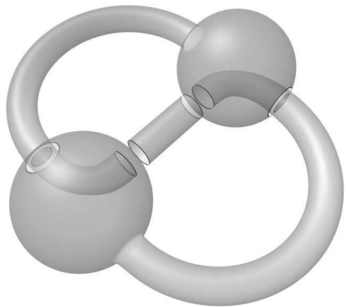}
 \end{center}
 \caption{A diagram of a $2$-knot}
 \label{fig:ribbon_knot}
\end{figure}

\subsection{Motion picture}
\label{subsec:motion_picture}

Let $f : \mathbb{R}^{4} \rightarrow \mathbb{R}$ be a map sending $(x, y, z, t)$ to $t$ and $p : \mathbb{R}^{4} \rightarrow \mathbb{R}^{3}$ a projection sending $(x, y, z, t)$ to $(x, y, z)$.
A \emph{motion picture} of a $2$-knot $K$ is a one-parameter family $\{ p(f^{-1}(t) \cap K) \}_{t \in \mathbb{R}}$.
A value $t_{0}$ of parameter $t$ is said to be \emph{regular} if the image $p(f^{-1}(t_{0}) \cap K)$ is a link or the empty set.
Here, a link is circles embedded into $\mathbb{R}^{3}$ locally flatly.
Otherwise, $t_{0}$ is said to be \emph{critical}.
Deforming $K$ slightly by an ambient isotopy if necessary, we can assume that critical values lie in $\mathbb{R}$ discretely.
Further, we can assume that one of the three situations depicted in Figure \ref{fig:critical_points} occurs locally in a motion picture if a value $t_{0}$ is critical.
Points appearing in the sequences $(1)$, $(2)$, and $(3)$ at $t = t_{0}$ are called a \emph{maximal point}, a \emph{minimal point}, and a \emph{saddle point} respectively.
Two links corresponding to regular values $t_{0}$ and $t_{1}$ are related by an ambient isotopy if there are no critical values between $t_{0}$ and $t_{1}$.
\begin{figure}[htb]
 \begin{center}
  \includegraphics[scale=0.35]{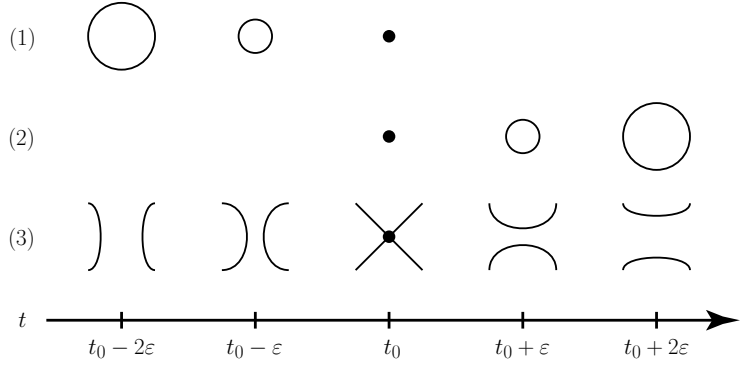}
 \end{center}
 \caption{Maximal point, minimal point, and saddle point}
 \label{fig:critical_points}
\end{figure}

A $2$-knot $K$ is said to be in a \emph{normal form} if its motion picture satisfies the following four conditions.
\begin{itemize}
 \item[1.] All minimal points appear at $t = -2$.
 \item[2.] All maximal points appear at $t = 2$.
 \item[3.] Each saddle point appears at $t = -1$ or $1$.
 \item[4.] The link corresponding to $t = 0$ is connected.
\end{itemize}
Figure \ref{fig:motion_picture} is an example of a motion picture of a $2$-knot in a normal form.
It is known that any $2$-knot can be deformed in a normal form by an ambient isotopy.
All motion pictures depicted in this paper excepting in Figure \ref{fig:motion_picture_of_two_twist_spun_trefoil} are motion pictures of $2$-knots in normal forms.
\begin{figure}[htb]
 \begin{center}
  \includegraphics[scale=0.32]{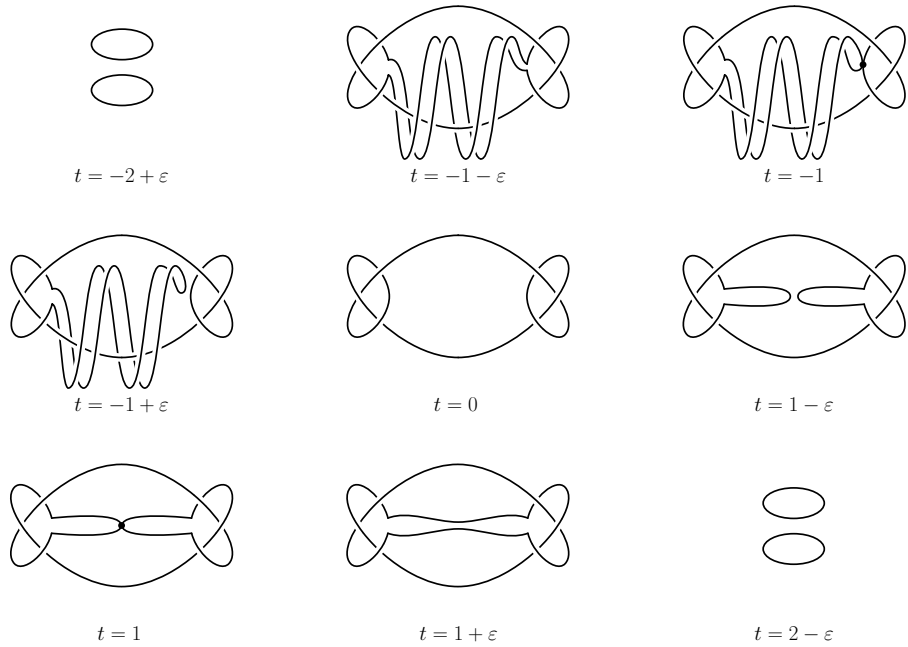}
 \end{center}
 \caption{A motion picture of a $2$-knot ($2$-twist-spun trefoil)}
 \label{fig:motion_picture}
\end{figure}

\section{Spun $2$-knot, twist-spun $2$-knot, and their known motion pictures}
\label{sec:spun_two_knot_twist_spun_two_knot_and_their_known_motion_pictures}

Artin \cite{Artin26} introduced a method to construct a $2$-knot, called a spun $2$-knot, from a knotted arc.
Zeeman \cite{Zeeman65} generalized this method, and defined a twist-spun $2$-knot.
The twist-spun trefoil is one of twist-spun $2$-knots.
In this section, we first recall the definitions of a spun $2$-knot and a twist-spun $2$-knot.
We next consider their known motion pictures.

\subsection{Spun $2$-knot and twist-spun $2$-knot}
\label{subsec:spun_two_knot_and_twist_spun_two_knot}

Let $\mathbb{R}^{3}_{+} = \{ (x, y, z) \in \mathbb{R}^{3} \mid z \geq 0 \}$ be the upper half space.
A \emph{properly embedded arc} is an arc embedded into $\mathbb{R}^{3}_{+}$ locally flatly and intersecting with $\partial \mathbb{R}^{3}_{+}$ transversally only at its end points.
See Figure \ref{fig:properly_embedded_arc}.
Let $k$ be a properly embedded arc.
Spin $\mathbb{R}^{3}_{+}$ 360 degrees in $\mathbb{R}^{4}$ along $\partial \mathbb{R}^{3}_{+}$.
Remark that $(x, y, z \cos \theta, z \sin \theta)$ is a coordinate of $\mathbb{R}^{4}$ ($x, y, z \in \mathbb{R}$, $z \geq 0$, $0 \leq \theta < 2 \pi$).
Then the locus of $k$ yields a $2$-knot.
See Figure \ref{fig:spun_knot}.
We call this $2$-knot a \emph{spun $2$-knot} of $k$.
\begin{figure}[htb]
 \begin{center}
  \includegraphics[scale=0.32]{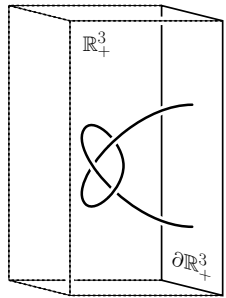}
 \end{center}
  \caption{A properly embedded arc}
 \label{fig:properly_embedded_arc}
\end{figure}
\begin{figure}[htb]
 \begin{center}
  \includegraphics[scale=0.50]{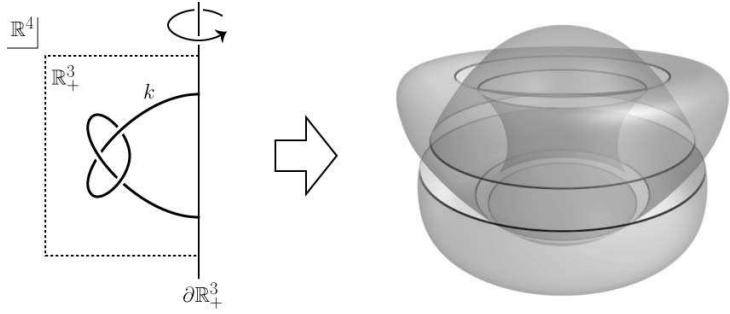}
 \end{center}
  \caption{Spin $\mathbb{R}^{3}_{+}$ 360 degrees in $\mathbb{R}^{4}$ along $\partial \mathbb{R}^{3}_{+}$. Then we obtain a spun $2$-knot which is the locus of a properly embedded arc $k$.}
 \label{fig:spun_knot}
\end{figure}

Let $B$ be a $3$-ball in $\mathbb{R}^{3}_{+}$ which contains a knotted part of $k$.
While spinning $\mathbb{R}^{3}_{+}$ along $\partial \mathbb{R}^{3}_{+}$, twist $B$ $n$-times as depicted in Figure \ref{fig:twist_spun_knot} ($n \geq 1$).
Then the locus of $k$ also yields a $2$-knot.
We call this $2$-knot an \emph{$n$-twist-spun $2$-knot} of $k$.
A spun $2$-knot and twist-spun $2$-knots of a properly embedded arc are not ambient isotopic to each other in general.
Zeemann \cite{Zeeman65} showed that any $1$-twist-spun $2$-knot is trivial.
That is, it bounds an embedded $3$-ball in $\mathbb{R}^{4}$.
\begin{figure}[htb]
 \begin{center}
  \includegraphics[scale=0.32]{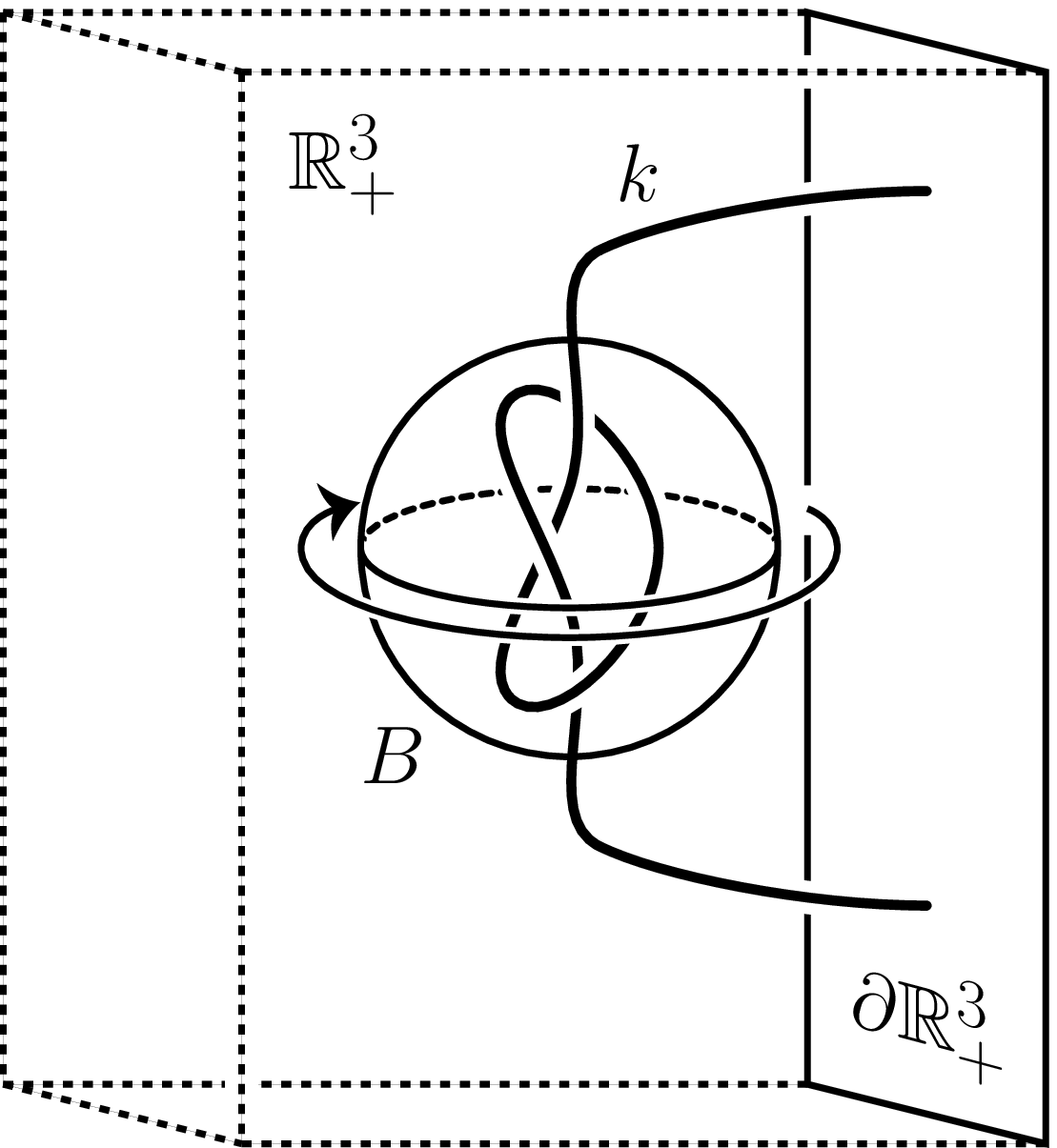}
 \end{center}
  \caption{Twist a $3$-ball $B$ containing a knotted part of a properly embedded arc $k$ $n$-times in $\mathbb{R}^{3}_{+}$.}
 \label{fig:twist_spun_knot}
\end{figure}

The properly embedded arc depicted in Figure \ref{fig:properly_embedded_arc} is called a (long) \emph{trefoil}.
The \emph{spun trefoil} and the \emph{twist-spun trefoil} are a spun $2$-knot and a twist-spun $2$-knot of a trefoil respectively.

\subsection{Known motion pictures}
\label{subsec:known_motion_pictures}

Consider parallel vertical hyperplanes depicted in Figure \ref{fig:motion_picture_of_spun_knot} that intersect with a spun $2$-knot.
We assume that the hyperplanes are parametrized by $t \in [-2, 2]$.
It is not difficult to see cross-sections of a spun $2$-knot taken along the hyperplanes.
We thus have a motion picture consisting of the cross-sections.
Figure \ref{fig:motion_picture_of_spun_trefoil} is a motion picture of the spun trefoil.
\begin{figure}[htb]
 \begin{center}
  \includegraphics[scale=0.20]{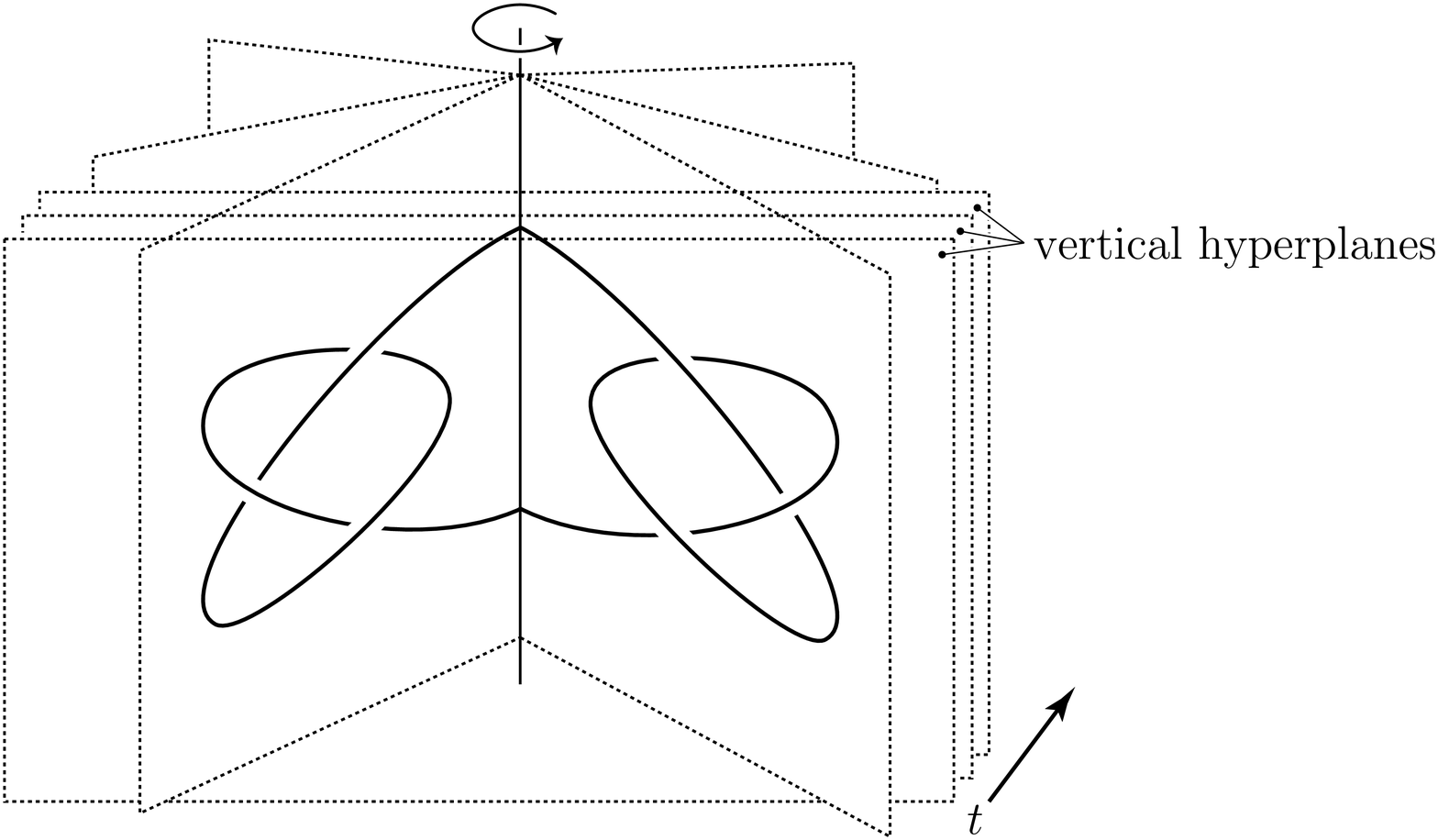}
 \end{center}
  \caption{Parallel vertical hyperplanes that intersect with a spun $2$-knot}
 \label{fig:motion_picture_of_spun_knot}
\end{figure}
\begin{figure}[htb]
 \begin{center}
  \includegraphics[scale=0.28]{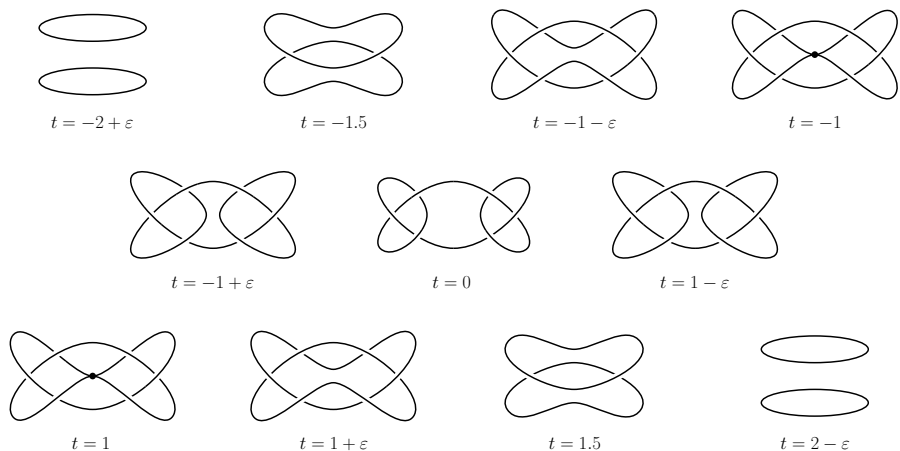}
 \end{center}
  \caption{A motion picture of the spun trefoil}
 \label{fig:motion_picture_of_spun_trefoil}
\end{figure}

Modifying this motion picture, we can obtain a motion picture of a twist-spun $2$-knot as follows.
For simplicity, we consider a motion picture of the $n$-twist-spun trefoil in the remaining of this section.
Add a motion in the motion picture of the spun trefoil at $t = -1$ that corresponds to $n$-twist of the left half.
See Figure \ref{fig:twist_in_motion}.
The gray band depicts a saddle point which is stretched by twisting.
This modified motion picture obviously presents the $n$-twist-spun trefoil.
Deforming this $2$-knot in a normal form by an ambient isotopy, we have a motion picture of the $n$-twist-spun trefoil depicted in Figure \ref{fig:motion_picture} (in the case $n = 2$).
\begin{figure}[htb]
 \begin{center}
 \includegraphics[scale=0.32]{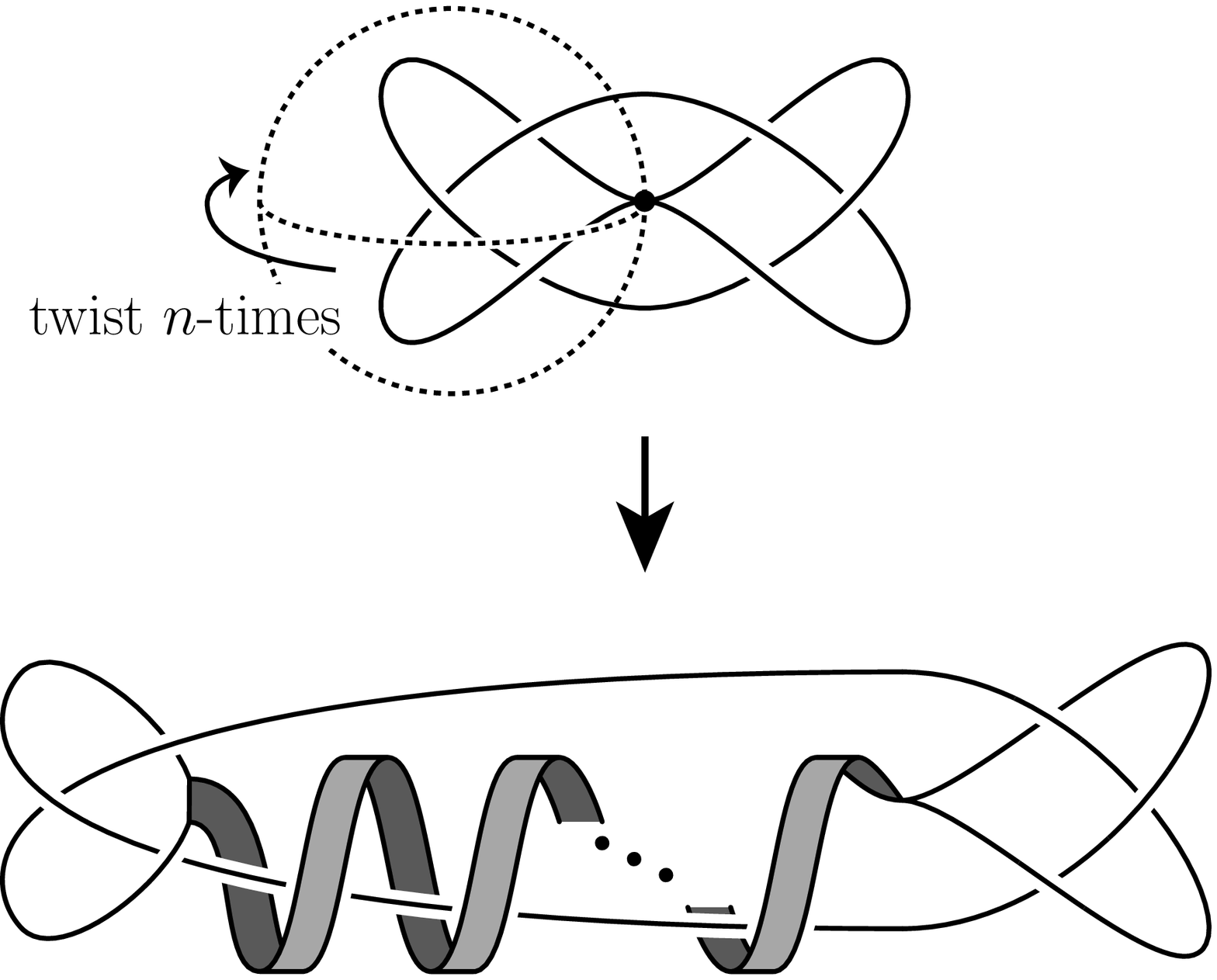}
 \end{center}
  \caption{Twist the left half $n$-times at $t = -1$.}
 \label{fig:twist_in_motion}
\end{figure}

This motion picture of the twist-spun trefoil does not exhibit the periodicity well.
Because we have cut the twist-spun trefoil along parallel vertical hyperplanes.
If we cut the twist-spun trefoil along parallel horizontal hyperplanes, then we shall obtain a motion picture which exhibits the periodicity.
However, it is difficult to see cross-sections of the twist-spun trefoil taken along parallel horizontal hyperplanes with paper and pencil.
We thus use a computer.

\section{Diagram of the $2$-twist-spun trefoil}
\label{sec:diagram_of_the_two_twist_spun_trefoil}

In this section, we construct a diagram of the $2$-twist-spun trefoil, instead of the $2$-twist-spun trefoil itself, using a 3D modeling software.
Satoh and Shima drew a picture of a regular neighborhood of the singularity set of a $2$-twist-spun trefoil diagram locally in Figure 13 of \cite{SS04}.
Using this picture as a reference, we first construct a regular neighborhood of the singularity set of a $2$-twist-spun trefoil diagram.
We then pay attention that it exhibits the periodicity well.
As a result, we obtain a part of a diagram depicted in Figure \ref{fig:frame_of_two_twist_spun_trefoil}.
See also a movie at \url{http://www.youtube.com/watch?v=BB1B6hDJga8} to watch this part of a diagram in some directions.
\begin{figure}[htb]
 \begin{center}
  \includegraphics[scale=0.55]{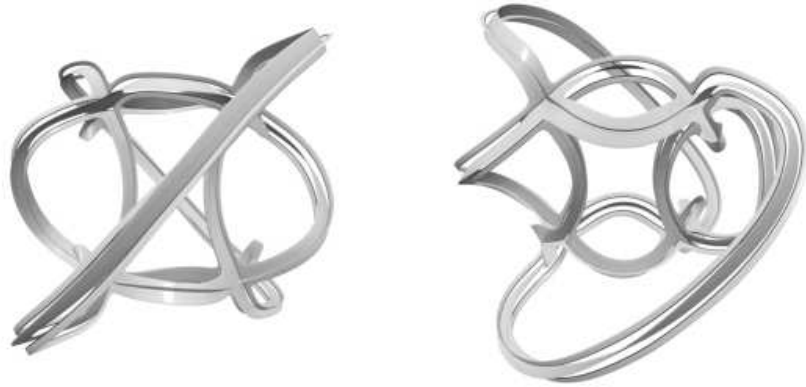}
 \end{center}
 \caption{A regular neighborhood of the singularity set of a diagram of the $2$-twist-spun trefoil. Because of the difference in settings, this is a mirror image of the one depicted in \cite{SS04}.}
 \label{fig:frame_of_two_twist_spun_trefoil}
\end{figure}

We next fill the remaining of a diagram.
In the process, we have to create no more double points, triple points, and branch points.
Further, a result should be a broken immersed $2$-sphere.
Then we obtain a diagram depicted in Figure \ref{fig:diagram_of_two_twist_spun_trefoil}.
See also a movie at \url{http://www.youtube.com/watch?v=-obouoE3DVA} to watch this diagram in some directions.
There may be many ways to execute this process.
We thus have to show the following proposition.
\begin{figure}[htb]
 \begin{center}
  \includegraphics[scale=0.60]{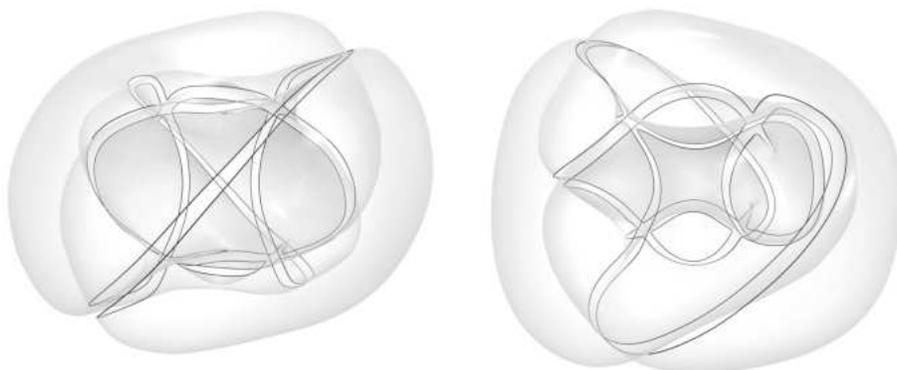}
 \end{center}
 \caption{A diagram of the $2$-twist-spun trefoil}
 \label{fig:diagram_of_two_twist_spun_trefoil}
\end{figure}

\begin{proposition}
\label{prop:is_diagram_of_two_twist_spun_trefoil}
The diagram depicted in Figure \ref{fig:diagram_of_two_twist_spun_trefoil} presents the $2$-twist-spun trefoil.
\end{proposition}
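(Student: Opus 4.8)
The plan is to reconstruct the $2$-knot $K \subset \mathbb{R}^{4}$ from the diagram in Figure \ref{fig:diagram_of_two_twist_spun_trefoil}, slice it along a suitable one-parameter family of hyperplanes to produce a motion picture, and then match that motion picture against the known motion picture of the $2$-twist-spun trefoil obtained in Section \ref{sec:spun_two_knot_twist_spun_two_knot_and_their_known_motion_pictures}. Recall that, although the regular neighborhood of the singularity set was modeled after Figure 13 of \cite{SS04} and so agrees with Satoh and Shima's picture, the singularity set alone does not determine the $2$-knot: the filling of the remaining sheets is a choice, and this is exactly why the proposition must be proved rather than taken for granted.

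First I would lift the broken surface back into $\mathbb{R}^{4}$. Along each double point curve the under-sheet is pushed slightly in the fourth coordinate according to the over/under information recorded in the diagram, so that $\pi(K)$ reproduces the given picture. Since the filling was carried out without creating any new double points, triple points, or branch points, and since the result is a broken immersed $2$-sphere, the lifted surface $K$ is a genuinely embedded $2$-sphere, hence a $2$-knot. This step is essentially routine once the over/under data are read off correctly.

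Next I would choose the height function and slicing direction that reproduce the vertical cross-sections used in Section \ref{sec:spun_two_knot_twist_spun_two_knot_and_their_known_motion_pictures}, and read off, slice by slice, the one-parameter family of links cut out of $K$, recording the births, deaths, and saddle moves as the parameter increases. I would then compare this family with the known (non-symmetric) motion picture of the $2$-twist-spun trefoil: the spun trefoil of Figure \ref{fig:motion_picture_of_spun_trefoil}, modified by the $n = 2$ twist inserted at $t = -1$ as in Figure \ref{fig:twist_in_motion} and deformed into the normal form of Figure \ref{fig:motion_picture}. By the final remark of Section \ref{subsec:motion_picture}, two motion pictures present ambient isotopic $2$-knots as soon as their regular cross-sections between consecutive critical values agree up to ambient isotopy; so it suffices to check that the sequences of critical events coincide and that the intermediate links match.

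The main obstacle lies in this comparison: faithfully extracting the cross-sections from the three-dimensional model and certifying that the over/under data produce exactly the link claimed at each slice, together with verifying the ambient isotopies relating successive regular slices. This is precisely the bookkeeping that the computer visualization, and the accompanying movies, are meant to render tractable; by contrast, the reconstruction of $K$ in the second paragraph and the final appeal to equivalence of motion pictures are straightforward.
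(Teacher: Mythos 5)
Your overall strategy---lift the broken surface to an embedded sphere $K \subset \mathbb{R}^{4}$, slice $K$ along parallel hyperplanes, and compare the resulting motion picture with the known one of Figure \ref{fig:motion_picture}---is a genuinely different route from the paper's. The paper does not compare motion pictures at all: it slices the diagram along vertical half-planes rotating about a central axis (Figure \ref{fig:planes_extending_radially}), parametrized by $\theta$, observes that by periodicity it suffices to examine $0 \leq \theta \leq \pi$, and verifies that this finite sequence of cross-sections presents a motion corresponding to one twist of a trefoil by matching it against the sequence established by Satoh and Shima. The characterization of twist-spun $2$-knot diagrams \cite{Satoh02, SS04} then identifies the diagram directly. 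The radial slicing is precisely what makes the verification finite and checkable; your parallel slicing forfeits the symmetry and forces a global comparison between two embeddings that are not related in any obvious level-preserving way.

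The genuine gap is in your comparison criterion. The remark you cite from Section \ref{subsec:motion_picture} says only that, \emph{within a single motion picture}, the links at two regular values with no critical value between them are ambient isotopic; it does not say that two motion pictures whose sequences of critical events coincide and whose regular slices are levelwise ambient isotopic present ambient isotopic $2$-knots. That sufficiency claim is not true as stated: the isotopies matching corresponding slices must be chosen coherently, i.e., assemble into a level-preserving ambient isotopy continuous in $t$, and in particular the attaching data of the saddle bands matters, not merely the link types of the slices. Independently chosen isotopies at each regular level need not glue into an isotopy of $\mathbb{R}^{4}$. So the step you dismiss as bookkeeping---``verifying the ambient isotopies relating successive regular slices''---is where the actual mathematical content lives, and your proposal supplies no mechanism for it. To repair the argument you would either have to exhibit a continuous one-parameter family of isotopies (with matching band data) carrying one movie to the other, or abandon the parallel slicing and argue as the paper does, where Satoh's theorem converts the periodic radial cross-section data directly into the identification with the $2$-twist-spun trefoil.
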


\begin{proof}
Consider vertical planes depicted in Figure \ref{fig:planes_extending_radially} that extend radially from a line passing through the center of the diagram.
We assume that the planes are parametrized by $\theta \in [0, 2 \pi)$.
Let us observe cross-sections of the diagram taken along the planes.
It is easy in a computer.
Assume that the cross-sections are also parametrized by $\theta$.
It is sufficient that we examine cross-sections between $\theta = 0$ and $\theta = \pi$, because the diagram is periodic.
That cross-sections are depicted in Figure \ref{fig:proof_is_diagram_of_two_twist_spun_trefoil_1}.
See also a movie at \url{http://www.youtube.com/watch?v=-OEMiR43iIs} for more details.
A transformation sequence of cross-sections between $\theta = 0$ and $\theta = \pi$ is depicted in Figure \ref{fig:proof_is_diagram_of_two_twist_spun_trefoil_2}.
The moves from one to the next are indicated by gray arrows etc.
We claim that this sequence presents a motion corresponding to one twist of a trefoil.
Then the diagram presents the $2$-twist-spun trefoil (see \cite{Satoh02, SS04}).
\begin{figure}[htb]
 \begin{center}
  \includegraphics[scale=0.35]{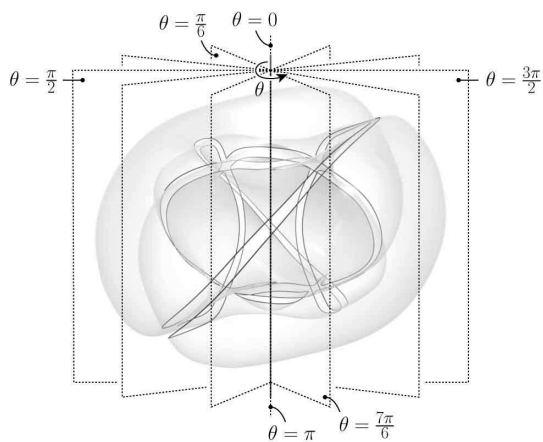}
 \end{center}
 \caption{Vertical planes extending radially from the center line of the $2$-twist-spun trefoil diagram}
 \label{fig:planes_extending_radially}
\end{figure}
\begin{figure}[htb]
 \begin{center}
  \includegraphics[scale=0.67]{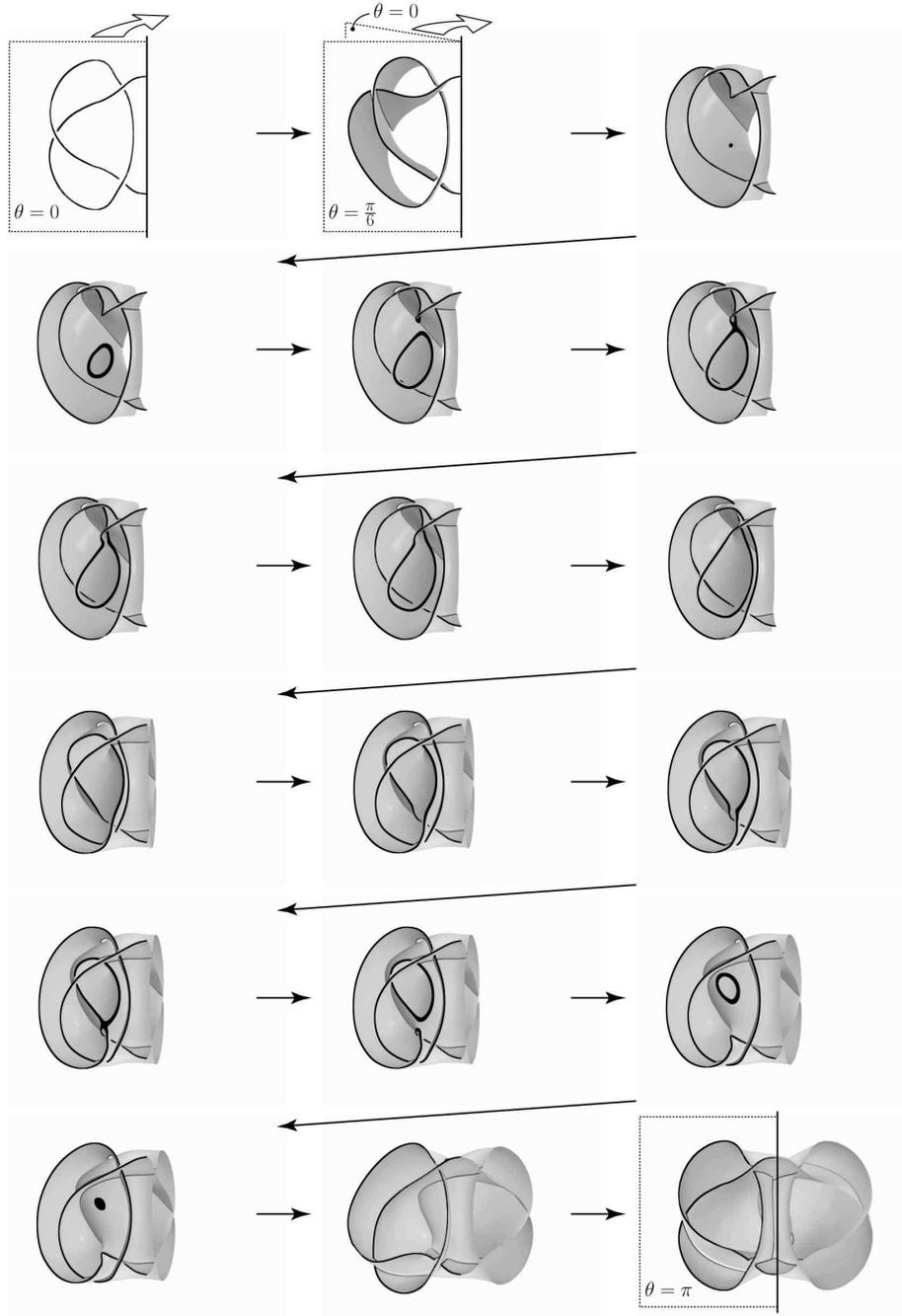}
 \end{center}
 \caption{Cross-sections of the diagram between $\theta = 0$ and $\theta = \pi$}
 \label{fig:proof_is_diagram_of_two_twist_spun_trefoil_1}
\end{figure}
\begin{figure}[htb]
 \begin{center}
  \includegraphics[scale=0.21]{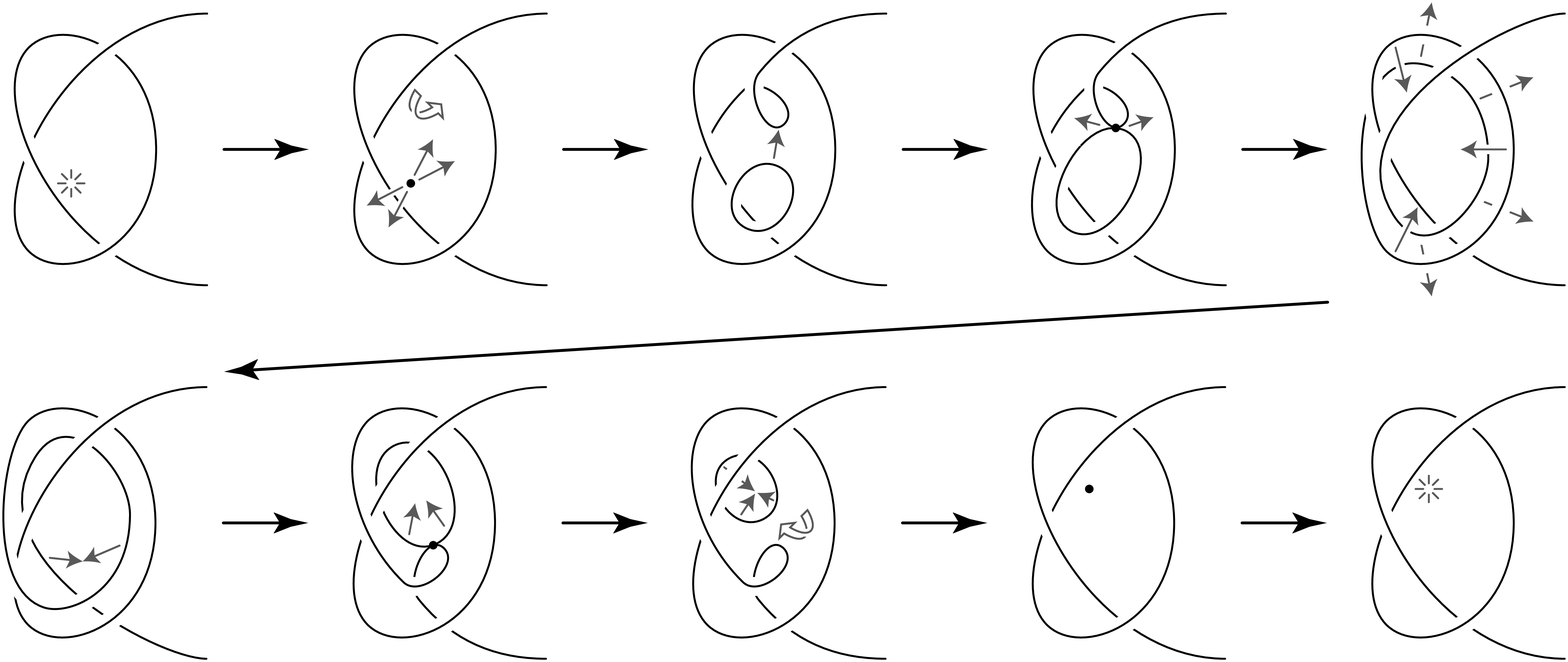}
 \end{center}
 \caption{A transformation sequence of cross-sections between $\theta = 0$ and $\theta = \pi$}
 \label{fig:proof_is_diagram_of_two_twist_spun_trefoil_2}
\end{figure}

To prove the claim, consider two sequences depicted in Figures \ref{fig:proof_is_diagram_of_two_twist_spun_trefoil_3} and \ref{fig:proof_is_diagram_of_two_twist_spun_trefoil_4}.
Satoh and Shima \cite{SS04} showed that the sequence depicted in Figure \ref{fig:proof_is_diagram_of_two_twist_spun_trefoil_3} presents a motion corresponding to one twist of a trefoil.
It is routine to check that sequences depicted in Figures \ref{fig:proof_is_diagram_of_two_twist_spun_trefoil_3} and \ref{fig:proof_is_diagram_of_two_twist_spun_trefoil_4} present a same motion as a whole.
That is, the difference does not affect to present a motion corresponding to one twist of a trefoil.
Further, sequences depicted in Figures \ref{fig:proof_is_diagram_of_two_twist_spun_trefoil_4} and \ref{fig:proof_is_diagram_of_two_twist_spun_trefoil_2} present a same motion as a whole.
Hence, the claim is true.
\begin{figure}[htb]
 \begin{center}
  \includegraphics[scale=0.21]{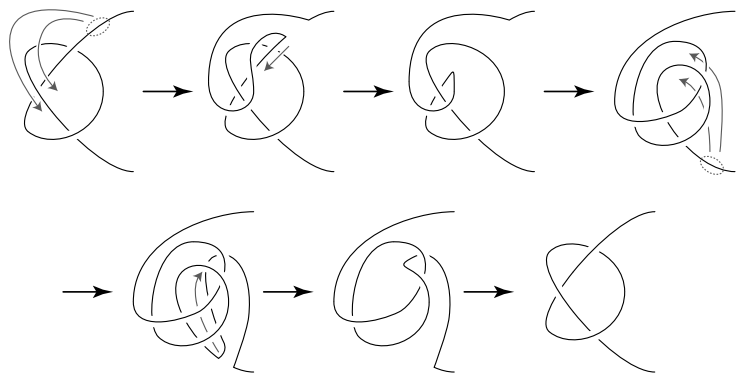}
 \end{center}
 \caption{A sequence which presents a motion corresponding to one twist of a trefoil (1)}
 \label{fig:proof_is_diagram_of_two_twist_spun_trefoil_3}
\end{figure}
\begin{figure}[htb]
 \begin{center}
  \includegraphics[scale=0.21]{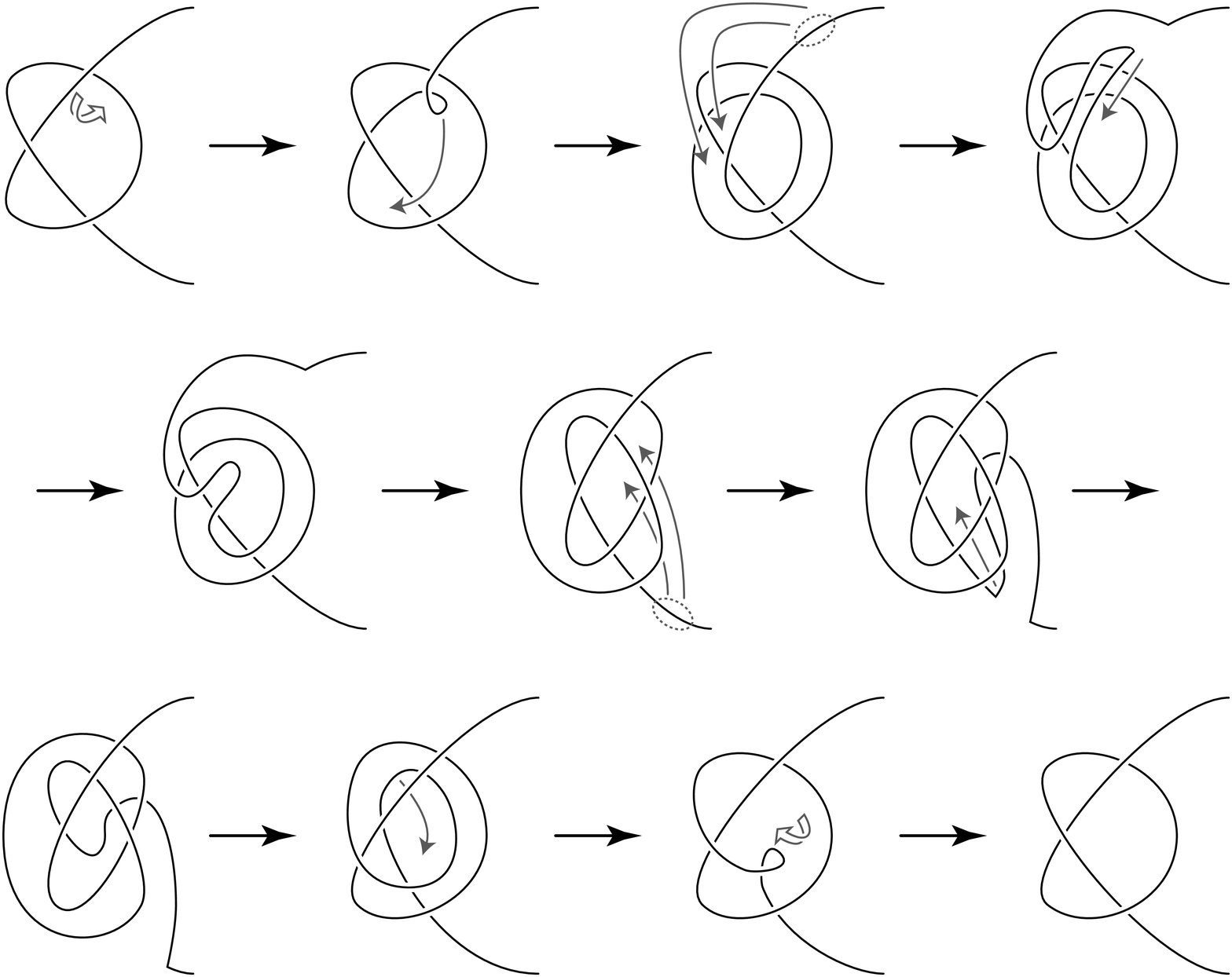}
 \end{center}
 \caption{A sequence which presents a motion corresponding to one twist of a trefoil (2)}
 \label{fig:proof_is_diagram_of_two_twist_spun_trefoil_4}
\end{figure}
\end{proof}

\begin{remark}
This diagram of the $2$-twist-spun trefoil has four triple points.
Further, it consists of four connected components\footnote{\hskip 0.1em See a movie at \url{http://www.youtube.com/watch?v=u1j8JLP4bUU} to watch a diagram which is color-coded according to connected components.}.
Therefore, this diagram realizes the triple point number and the sheet number of the $2$-twist-spun trefoil (see \cite{Satoh07, SS04}).
\end{remark}

\section{Proof of Theorem \ref{thm:main}}
\label{sec:proof_of_main_theorem}

\begin{proof}[Proof of Theorem \ref{thm:main}]
Let us cut the diagram of the $2$-twist-spun trefoil, which we have constructed in the previous section, along horizontal planes as depicted in Figure \ref{fig:cut_diagram_along_horizontal_planes}.
It is easy in a computer.
We assume that the planes are parametrized by $t \in \mathbb{R}$.
Then we obtain a motion picture of the $2$-twist-spun trefoil depicted in Figure \ref{fig:motion_picture_of_two_twist_spun_trefoil}.
See also a movie at \url{http://www.youtube.com/watch?v=6lIM9p6XOKo} for more details.
(Strictly speaking, this is not a motion picture.
It consists of projection images of all pictures of a motion picture with over/under information.
However, we do not distinguish here them.)
Note that we can find the periodicity, whose period is $\pi$, in this motion picture.
\begin{figure}[htb]
 \begin{center}
  \includegraphics[scale=0.50]{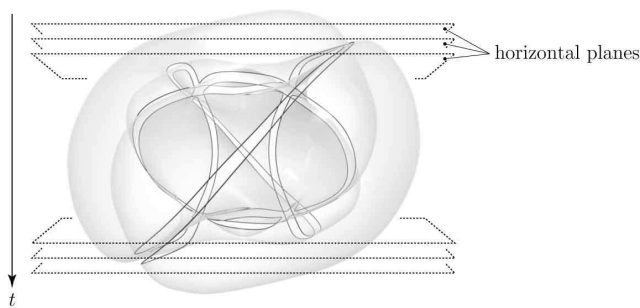}
 \end{center}
  \caption{Cut the $2$-twist-spun trefoil diagram along horizontal planes.}
 \label{fig:cut_diagram_along_horizontal_planes}
\end{figure}
\begin{figure}[htbp]
 \begin{center}
  \includegraphics[scale=0.67]{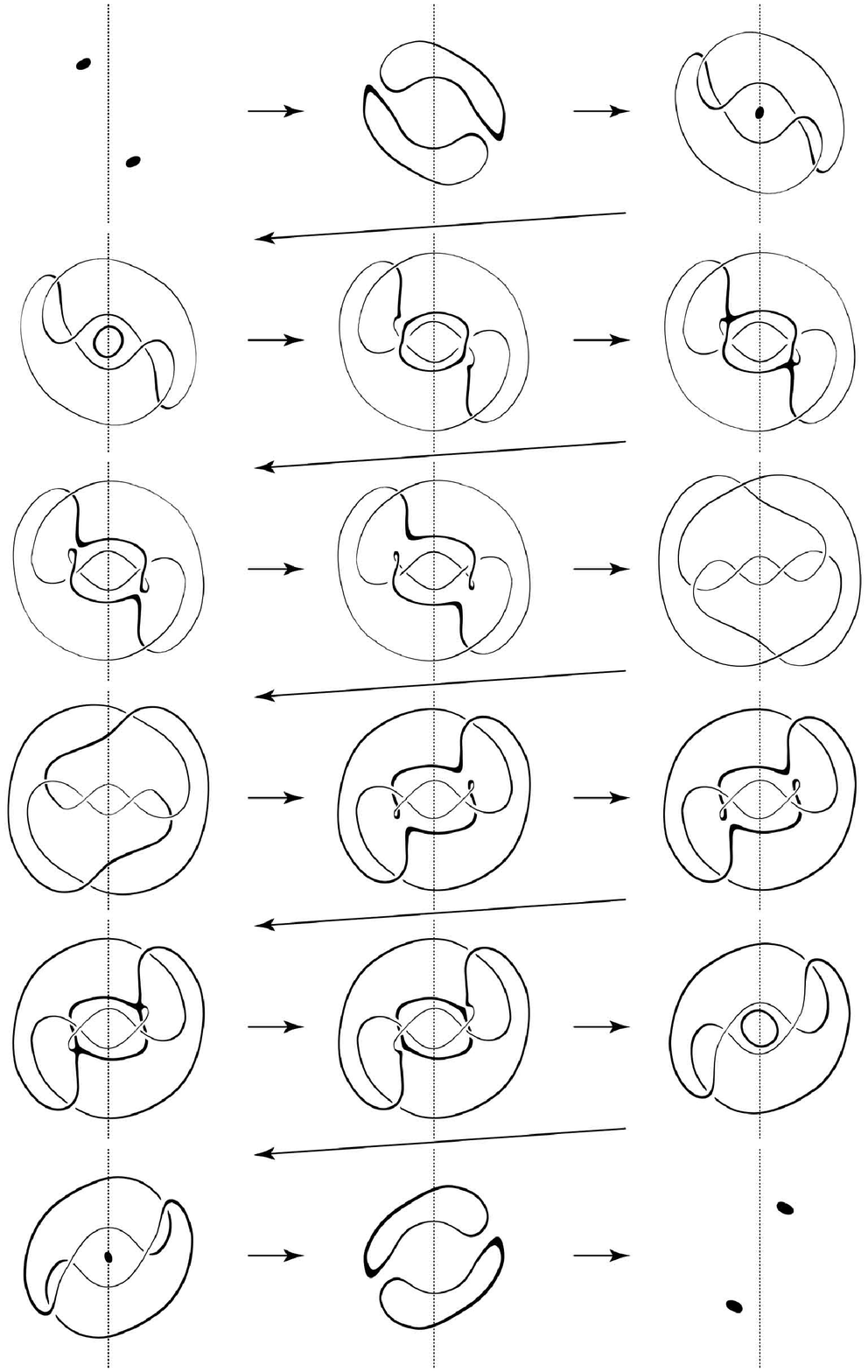}
 \end{center}
  \caption{A symmetric motion picture of the $2$-twist-spun trefoil}
 \label{fig:motion_picture_of_two_twist_spun_trefoil}
\end{figure}

The left half of this motion picture corresponds to the left half of the diagram depicted in Figure \ref{fig:planes_extending_radially}.
That is, a part of the diagram related with $\theta$ satisfying $0 \leq \theta \leq \pi$.
Recall that this part of the diagram presents one twist of a trefoil.
Therefore, if we lay out $n$ copies of this half of the motion picture (and this half of the diagram) in a circle compressing appropriately, then we obtain a motion picture (and a diagram) of the $n$-twist-spun trefoil.
Let us look at a part of the motion picture depicted in Figure \ref{fig:motion_picture_of_n_twist_spun_trefoil} that is sandwiched between two dotted lines.
Although it is compressed and normalized, this part coincides with the left half of the motion picture depicted in Figure \ref{fig:motion_picture_of_two_twist_spun_trefoil}.
Hence, the motion picture depicted in Figure \ref{fig:motion_picture_of_n_twist_spun_trefoil} presents the $n$-twist-spun trefoil.
\end{proof}

\subsection*{Acknowledgements}
The author would like to express his sincere gratitude to Professor Sadayoshi Kojima for encouraging him.
He is also grateful to Professor Shin Satoh for his invaluable comments.
He has used blender\footnote{\hskip 0.1em \url{http://www.blender.org/}}, which is a free open source and quite powerful 3D content creation suite, to construct, slice, and render $2$-knot diagrams.
This research has been supported in part by JSPS Global COE program ``Computationism as a Foundation for the Sciences''.

\bibliographystyle{amsplain}

\end{document}